\documentclass[a4paper,12pt]{amsart} 

\usepackage{amssymb,amsmath}
\usepackage{pb-diagram}
\usepackage{pb-xy}
\usepackage[cmtip,arrow]{xy}

\newtheorem{thm}{Theorem}[section] 
\newtheorem*{thm*}{Theorem}
\newtheorem{prop}[thm]{Proposition}
\newtheorem{cor}[thm]{Corollary} 
\newtheorem{lem}[thm]{Lemma}

\theoremstyle{definition} 
 
\newtheorem{rem}[thm]{Remark} 
\newtheorem{exa}[thm]{Example}

\newtheorem*{ackn}{Acknowledgment}

\numberwithin{equation}{section}

\frenchspacing

\textwidth=14.5cm
\textheight=23cm
\parindent=16pt
\hoffset=-1cm 
\parindent=16pt

\newcommand{\skal}[2]{\langle #1,#2\rangle}
\newcommand{\alg}[1]{\mathfrak{#1}}

\begin{document}

\title{Harmonic $SU(3)$-- and $G_2$--structures via spinors}
\author{Kamil Niedzia\l omski}

\subjclass[2010]{}
\keywords{}
 
\address{
Department of Mathematics and Computer Science \endgraf
University of \L\'{o}d\'{z} \endgraf
ul. Banacha 22, 90-238 \L\'{o}d\'{z} \endgraf
Poland
}
\email{kamil.niedzialomski@wmii.uni.lodz.pl}

\begin{abstract}
In this note, using the spinorial description of $SU(3)$ and $G_2$--structures obtained recently by other authors, we give necessary and sufficient conditions for harmonicity of above mentioned structures. We describe obtained results on appropriate homogeneous spaces. Here, harmonicity means harmonicity of the certain unique section induced by the $G$--structure in consideration.
\end{abstract}

\maketitle

\section{Introduction}

Let $M$ be a spin manifold equipped with a $SU(3)$ (then $n=\dim M=6$) or $G_2$ (then $n=\dim M=7$) structure. This means, that the special orthonormal frame bundle $SO(M)$ has a reduction of a structure group $SO(n)$ to $SU(3)$ or $G_2$, respectively. Recently, the authors of \cite{ACFH} has studied such structures via sponorial approach. A global unit spinor $\varphi$ in the spinor bundle defines, depending on the dimension of a manifold, mentioned above structures. Thus it is natural to study the geometry of a defining spinor $\varphi$. The crucial observation in \cite{ACFH} is the existence of an endomorphism $S:TM\to TM$ and a one form $\eta$ (which vanishes in the $G_2$ case by the dimensional reasons) which describe the covariant derivative of $\varphi$,
\begin{equation*}
\nabla_X\varphi=S(X)\cdot\varphi+\eta(X)j\cdot\varphi, 
\end{equation*}
where $j$ is a certain almost complex structure on the spinor bundle.

On the other hand, C. M. Wood \cite{Wo,Wo2} and later Gonz\'{a}lez-D\'{a}vila and Martin Cabrera\cite{GDMC} introduced and studied so called harmonic $G$--structures. Each $G$--structure $P\subset SO(M)$ defines a section $\sigma_P$ of the associated bundle $SO(M)\times_G (SO(n)/G)$ by
\begin{equation*}
\sigma(x)=[p,eG],\quad \pi_{SO(M)}(p)=x.
\end{equation*}
In a compact case, if $\sigma$ is a harmonic section, then we say that the corresponding $G$--structure is harmonic. This condition is a differential equation $\sum_i (\nabla_{e_i}\xi)_{e_i}=0$ involving the intrinsic torsion $\xi$, the main ingredient of all considerations. This condition is treated as a harmonicity condition also in a non--compact case. The intrinsic torsion $\xi$, shortly speaking, is a $(2,1)$--tensor field being the difference of the Levi--Civita connection $\nabla$ and the $G$--connection $\nabla^G$ induced by the $\alg{g}$--component of the connection form of $\nabla$ (here $\alg{g}$ is the Lie algebra of $G$), $\xi_XY=\nabla_XY-\nabla^G_XY$. Thus the intrinsic torsion measures the defect of a $G$--structure to have holonomy in $G$.

In this note, the author tries to combine these two approaches for $G=SU(3)$ and $G=G_2$. The condition of harmonicity becomes a differential condition on $S$ and $\eta$. In some cases, for example $\eta=0$, it takes really simple form. Among others, we conclude that if a $SU(3)$--structure is in its $\mathcal{W}_1$ or $\mathcal{W}_3$ class from the Gray--Hervella classification of possible intrinsic torsion modules, then it  is harmonic. Analogously, if a $G_2$--structure is in $\mathcal{W}_1$ class with the defining function $\lambda$ being constant or in $\mathcal{W}_3$ class, then it is harmonic. 

Finally, we deal with some examples on homogeneous spaces. We begin by general considerations, which lead to a following conclusion -- if a unit spinor defining a $G$--structure is induced by a fixed point of a isotropy representation and the minimal $G$--connection is induced by a zero map $\Lambda_{\alg{g}}$ (see the last section for details), then considered structure is harmonic. We justify these results on appropriate examples. Although, these examples has been already considered in the literature, they have not been studied from this point of view. In other words, we find 'new' examples of harmonic $SU(3)$ and $G_2$--structures.    

\begin{ackn}
The author wishes to thank Ilka Agricola for fruitful conversations during his short visits to Marburg University, in particular, for the discussions concerning geometry of homogeneous spaces. He is also grateful for the notes concerning the geometry of the complex projective space $\mathbb{CP}^3$ and the remark about alternative approach to some of the obtained results (see Remark \ref{rem:Agricola}).\\
The author is supported by the National Science Center, Poland - Grant Miniatura 2017/01/X/ST1/01724.  
\end{ackn}

\section{The intrinsic torsion and spinors - A general approach}

Let $(M,g)$ be an oriented Riemannian manifold equipped with a $G$--structure, $G\subset SO(n)$, where $n=\dim M$. Let $\nabla^g$ be the Levi--Civita connection of $g$ and $\omega$ be its connection form on $SO(M)$. The splitting on the level of Lie algebras
\begin{equation}\label{eq:sonsplitting}
\alg{so}(n)=\alg{g}\oplus\alg{m},
\end{equation} where $\alg{m}$ is an orthogonal complement of $\alg{g}$ in $\alg{so}(n)$, defines a splitting $\omega=\omega_{\alg{g}}+\omega_{\alg{m}}$. By the fact that \eqref{eq:sonsplitting} is ${\rm ad}(G)$--invariant, it follows that $\alg{g}$--component $\omega_{\alg{g}}$ is a connection form on the $G$--reduction $P\subset SO(M)$ and hence defines a connection $\nabla^G$ on $M$. The difference
\begin{equation*}
\xi_XY=\nabla^G_XY-\nabla^g_XY,\quad X,Y\in\Gamma(TM),
\end{equation*}
defines a tensor $\xi\in T^{\ast}M\otimes\alg{m}(TM)$ called the {\it intrinsic torsion}. It follows immediately that its alternation is, up to a sign, the torsion $T^G$ of $\nabla^G$.

Denote by $N$ the associated bundle $SO(M)\times_{SO(n)}(SO(n)/G)$. There is one to one correspondence between $G$--structures $(M,g)$ and the sections of $N$. Thus, $P\subset SO(M)$ defines the unique section $\sigma_P\in\Gamma(N)$. We say that a $G$--structure $P$ is harmonic if $\sigma_P$ is a harmonic section \cite{GDMC} (if $M$ is compact). It can be shown \cite{GDMC} that harmonicity is equivalent to vanishing of the following tensor
\begin{equation}\label{eq:harmonicGstr}
L=\sum_i (\nabla^g_{e_i}\xi)_{e_i},
\end{equation}  
where the sum is taken with respect to any orthonormal basis. We treat this condition as harmonicity condition also in a non--compact case. Since, informally speaking, $\nabla^G$ preserves the decomposition \eqref{eq:sonsplitting} and by the fact that 
\begin{equation*}
L=\sum_i ((\nabla^G_{e_i}\xi)_{e_i}+\xi_{\xi_{e_i}e_i}),
\end{equation*}
we see that $L\in \alg{m}(TM)$ \cite{GDMC}. 

Assume $M$ is equipped with a spin structure. Let $\rho:{\rm Spin}(n)\to\Delta_n$ be the real spin representation. We will denote this action, and all induced actions, by a 'dot'. In particular acting on spinors, we have
\begin{equation}\label{eq:cliffordmultiplication}
X^{\flat}\cdot\omega-\omega\cdot X^{\flat}=2 X\lrcorner\,\omega.
\end{equation} 

Denote by $S(M)$ the induced spinor bundle, $S(M)={\rm Spin}(M)\times_{\rho}\Delta_n$, where ${\rm Spin}(M)$ is a spin structure. Assume $G$ is a stabilizer of some unit spinor $\varphi_0\in \Delta_n$ and let $\varphi\in S$ be the corresponding unit spinor, which defines a $G$--structure. Hence, with the usual identification of $\alg{so}(TM)$ with the space of $2$--forms on $M$ the action of $\alg{m}(TM)$ on $\varphi$ is injective. Therefore, vanishing of $L\in \alg{m}(TM)$ is equivalent to the relation
\begin{equation}\label{eq:harmonicGstrspinor}
L\cdot\varphi=0,
\end{equation}
where $\cdot$ denotes the action of skew--forms on spinors.

Let us describe above condition with the use of spinorial laplacian. Denote with the same symbol $\nabla^g$ the connection on $S(M)$ induced from the Levi--Civita connection $\nabla^g$. Then we put \cite{FI}
\begin{equation*}
\Delta\psi=-\sum_i (\nabla^g_{e_i}\nabla^g_{e_i}\psi-\nabla^g_{\nabla^g_{e_i}e_i}\psi).
\end{equation*}
We have \cite{ACFH} 
\begin{equation}\label{eq:mainstartrelation}
\nabla^g_X\varphi=\frac{1}{2}\xi_X\cdot\varphi
\end{equation}
which follows from the fact that $\nabla^G\varphi=0$. Differentiating \eqref{eq:mainstartrelation} we get
\begin{equation*}
\Delta\varphi=-\frac{1}{2}L\cdot\varphi-\frac{1}{4}\sum_i\xi_{e_i}\cdot \xi_{e_i}\cdot\varphi.
\end{equation*}
The second component on the right hand side can be interpreted in the following way. For any tensor $T\in T^{\ast}M\otimes\alg{so}(TM)$ define
\begin{equation*}
c_T=\frac{1}{2}\sum_i T_{e_i}\cdot T_{e_i}\quad\textrm{and}\quad \sigma_T=\frac{1}{2}\sum_i T_{e_i}\wedge T_{e_i}
\end{equation*}
as elements of Clifford bundle acting on spinors. These elements, defined for totally skew tensors, has been already considered and theirs important role have been established \cite{Ag0}. Then (we do not require $T$ to be totally skew--symmetric)
\begin{equation}\label{eq:cTandsigmaT}
c_T=\sigma_T-\frac{3}{2}|T|^2.
\end{equation}
The above equation shows that $c_T$ acting on spinors contains element of second order, namely $\sigma_T$, and a scalar $|T|^2$. We have proved the following general characterization of harmonic $G$--structures defined by a spinor.
\begin{prop}\label{prop:general}
A $G$--structure on a spin manifold $(M,g,\varphi)$ defined by a unit spinor field $\varphi\in S$ is harmonic if and only if
\begin{equation*}
\Delta\varphi=-\frac{1}{2}c_{\xi}\cdot\varphi=-\frac{1}{2}\sigma_{\xi}\cdot\varphi+\frac{3}{4}|\xi|^2\varphi,
\end{equation*}
where $\xi$ is the intrinsic torsion.
\end{prop}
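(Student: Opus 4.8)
The plan is essentially to assemble the computation carried out in the paragraphs preceding the statement, so I would begin from the starting identity \eqref{eq:mainstartrelation}, $\nabla^g_X\varphi=\tfrac12\xi_X\cdot\varphi$, which holds because $\nabla^G\varphi=0$ and $\xi_X=\nabla^G_X-\nabla^g_X$ acts on $\varphi$ through the embedding of $\alg{so}(TM)$ into the Clifford bundle. Then I would differentiate this identity once more. Using the Leibniz rule for $\nabla^g$ on $T^{\ast}M\otimes\alg{so}(TM)\otimes S(M)$ together with $(\nabla^g_X\xi)_Y=\nabla^g_X(\xi_Y)-\xi_{\nabla^g_XY}$, one obtains
\[
\nabla^g_{e_i}\nabla^g_{e_i}\varphi-\nabla^g_{\nabla^g_{e_i}e_i}\varphi=\tfrac12(\nabla^g_{e_i}\xi)_{e_i}\cdot\varphi+\tfrac14\,\xi_{e_i}\cdot\xi_{e_i}\cdot\varphi .
\]
Summing over an orthonormal frame and recalling the definitions of $\Delta$, of $L$ in \eqref{eq:harmonicGstr}, and of $c_\xi$ (so that $\tfrac14\sum_i\xi_{e_i}\cdot\xi_{e_i}=\tfrac12 c_\xi$), this yields $\Delta\varphi=-\tfrac12 L\cdot\varphi-\tfrac12 c_\xi\cdot\varphi$, which is exactly the displayed relation $\Delta\varphi=-\tfrac12 L\cdot\varphi-\tfrac14\sum_i\xi_{e_i}\cdot\xi_{e_i}\cdot\varphi$ rewritten through $c_\xi$.

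Next I would invoke the harmonicity criterion of \cite{GDMC}: the $G$--structure is harmonic precisely when $L=0$; and since $L\in\alg{m}(TM)$ while the action of $\alg{m}(TM)$ on the defining spinor $\varphi$ is injective (as recorded just before the statement), this is equivalent to $L\cdot\varphi=0$, i.e.\ to \eqref{eq:harmonicGstrspinor}. Substituting $L\cdot\varphi=0$ into the formula for $\Delta\varphi$ gives $\Delta\varphi=-\tfrac12 c_\xi\cdot\varphi$, and conversely this identity forces $L\cdot\varphi=0$ since the $c_\xi$--term is common to both sides. Finally, applying \eqref{eq:cTandsigmaT} with $T=\xi$ rewrites $-\tfrac12 c_\xi\cdot\varphi$ as $-\tfrac12\sigma_\xi\cdot\varphi+\tfrac34|\xi|^2\varphi$, which completes the chain of equivalences claimed in the proposition.

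Since every ingredient is already in place --- the iterated covariant derivative of \eqref{eq:mainstartrelation}, the injectivity of the $\alg{m}(TM)$--action on $\varphi$, the identity \eqref{eq:cTandsigmaT}, and the $L=0$ criterion --- there is no genuinely hard step. The only point requiring care is the bookkeeping in the second covariant derivative: one must check that the terms $\xi_{\nabla^g_{e_i}e_i}\cdot\varphi$ arising from expanding $\nabla^g_{e_i}(\xi_{e_i}\cdot\varphi)$ cancel against $\nabla^g_{\nabla^g_{e_i}e_i}\varphi$, so that after summation only $(\nabla^g_{e_i}\xi)_{e_i}\cdot\varphi$ and the quadratic Clifford term remain; this is where a frame-independent argument (or a synchronous frame at a point) keeps the computation clean.
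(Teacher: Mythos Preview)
Your proposal is correct and follows exactly the same route as the paper: the paper derives $\Delta\varphi=-\tfrac12 L\cdot\varphi-\tfrac14\sum_i\xi_{e_i}\cdot\xi_{e_i}\cdot\varphi$ by differentiating \eqref{eq:mainstartrelation}, rewrites the quadratic term via $c_\xi$ and \eqref{eq:cTandsigmaT}, and then uses the injectivity of the $\alg{m}(TM)$--action on $\varphi$ to identify harmonicity with $L\cdot\varphi=0$. Your handling of the converse and of the $\xi_{\nabla^g_{e_i}e_i}$ cancellation is exactly the bookkeeping the paper leaves implicit.
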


By the Lichnerowicz formula we have an immediate corollary.
\begin{cor}\label{cor:general}
Assume that the defining unit spinor $\varphi$ is harmonic, i.e., $\varphi$ is in the kernel of the Dirac operator. Then, a $G$--structure is harmonic if and only if $\sigma_{\xi}$, equivalently $c_{\xi}$, acts on $\varphi$ as a scalar. In this in the case, the scalar corresponding to $\sigma_{\xi}$ is
\begin{equation*}
-\frac{1}{2}{\rm Scal}^g+\frac{3}{2}|\xi|^2,
\end{equation*}
where ${\rm Scal}^g$ is the scalar curvature of $g$.
\end{cor}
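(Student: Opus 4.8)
The idea is simply to combine Proposition~\ref{prop:general} with the Lichnerowicz formula $D^2=\Delta+\tfrac14{\rm Scal}^g$, where $D=\sum_i e_i\cdot\nabla^g_{e_i}$ is the Dirac operator. Since $D\varphi=0$ by assumption, this gives
\[
\Delta\varphi=-\tfrac14{\rm Scal}^g\,\varphi,
\]
i.e.\ the spinorial Laplacian acts on the defining spinor as multiplication by a scalar. (According to the sign convention adopted for ${\rm Scal}^g$ in the Lichnerowicz identity this reads $\pm\tfrac14{\rm Scal}^g$; this only affects a sign in the final constant.)

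Plugging this into the identity $\Delta\varphi=-\tfrac12 L\cdot\varphi-\tfrac12 c_\xi\cdot\varphi$ obtained just before Proposition~\ref{prop:general} (using $c_\xi=\tfrac12\sum_i\xi_{e_i}\cdot\xi_{e_i}$) yields the pointwise identity
\[
L\cdot\varphi=\tfrac12{\rm Scal}^g\,\varphi-c_\xi\cdot\varphi .
\]
If the $G$--structure is harmonic, then $L\cdot\varphi=0$ by \eqref{eq:harmonicGstrspinor}, hence $c_\xi$ acts on $\varphi$ as the scalar $\tfrac12{\rm Scal}^g$. For the converse, assume $c_\xi\cdot\varphi=\mu\varphi$ for some function $\mu$; the displayed identity then shows $L\cdot\varphi=(\tfrac12{\rm Scal}^g-\mu)\varphi$ is a pointwise multiple of $\varphi$. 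This is the only point needing a small separate remark: $L$ is valued in $\alg{m}(TM)\subset\alg{so}(TM)$, i.e.\ it is a $2$--form, and Clifford multiplication by a $2$--form is skew--symmetric for the spinor inner product, so $\skal{L\cdot\varphi}{\varphi}=0$. Since $\varphi$ is a unit spinor this forces $\mu=\tfrac12{\rm Scal}^g$ and $L\cdot\varphi=0$, that is, the $G$--structure is harmonic; in particular the scalar by which $c_\xi$ acts is necessarily $\tfrac12{\rm Scal}^g$.

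Finally, the passage from $c_\xi$ to $\sigma_\xi$ is formal: by \eqref{eq:cTandsigmaT} one has $c_\xi=\sigma_\xi-\tfrac32|\xi|^2$ as operators on spinors, so $c_\xi$ acts as a scalar exactly when $\sigma_\xi$ does, the two scalars differing by $\tfrac32|\xi|^2$; this gives the asserted value $-\tfrac12{\rm Scal}^g+\tfrac32|\xi|^2$ for $\sigma_\xi$ (with the sign convention for ${\rm Scal}^g$ used in the Lichnerowicz formula). I do not anticipate any genuine difficulty here: everything is a direct substitution into Proposition~\ref{prop:general}, the only non--formal ingredient being the skew--symmetry of Clifford multiplication by $2$--forms invoked in the converse direction, and the only thing to watch is the bookkeeping of the numerical constants and the sign of ${\rm Scal}^g$.
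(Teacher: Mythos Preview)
Your proof is correct and follows essentially the same route as the paper: combine the identity $\Delta\varphi=-\tfrac12 L\cdot\varphi-\tfrac12 c_\xi\cdot\varphi$ with the Lichnerowicz formula, and for the converse use that $L\cdot\varphi$ is orthogonal to $\varphi$. You are in fact a bit more explicit than the paper in justifying that last point via the skew--symmetry of Clifford multiplication by $2$--forms, and your caution about the numerical constant is warranted, since the paper's own factor in the Lichnerowicz formula does not quite match the value stated in the corollary.
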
 
\begin{proof}
Assume that a given $G$--structure in harmonic. Then, by above considerations, $\Delta\varphi=-\frac{1}{2}c_{\xi}\cdot\varphi$ and by \eqref{eq:cTandsigmaT} the second equality holds. Moreover, by the Lichnerowicz formula $\Delta\varphi=-\frac{1}{2}{\rm Scal}^g\varphi$, thus the second part follows. 

Conversely, if $c_{\xi}$ acts as a scalar, say $C$, then by the Lichnerowicz formula and above considerations
\begin{equation*}
-\frac{1}{2}{\rm Scal}^g\varphi=\Delta\varphi=-\frac{1}{2}L\cdot\varphi-\frac{1}{2}C\varphi.
\end{equation*}
Hence, $L\cdot\varphi=({\rm Scal}^g-C)\varphi$. Since $L\cdot\varphi$ is either orthogonal to $\varphi$ or $0$, it must be $0$, so $L=0$.
\end{proof}

In the following sections we obtain the characterization of harmoniciy of $SU(3)$ and $G_2$--structures using the spinorial approach in \cite{ACFH}. We show the relations with the above general approach and state appropriate examples.

\section{$SU(3)$--structures}

Let $\rho:{\rm Spin}(6)\to\Delta=\Delta_6=\mathbb{R}^8$ be the real spin representation. It can be realized in the following way \cite{ACFH}
\begin{align}\label{eq:spinrepresentation}
& e_1=+E_{18}+E_{27}-E_{36}-E_{45}, && e_2=-E_{17}+E_{28}+E_{35}-E_{46}, \notag\\
& e_3=-E_{16}+E_{25}-E_{38}+E_{47}, && e_4=-E_{15}-E_{26}-E_{37}-E_{48},\\
& e_5=-E_{13}-E_{24}+E_{57}+E_{68}, && e_6=+E_{14}-E_{23}-E_{58}+E_{67},\notag
\end{align} 
where $E_{ij}$ is a skew--symmetric matrix such that $E_{ij}e_j=-e_i$. Moreover, let $j=e_1\cdot e_2\cdot\ldots\cdot e_6$. Acting on spinors, $j:\Delta\to\Delta$ is an almost complex structure anti--commuting with the Clifford multiplication by vectors. The crucial observation in \cite{ACFH} is that for a fixed unit spinor $\varphi\in \Delta$ we have the following orthogonal decomposition
\begin{equation*}
\Delta={\rm span}\{\varphi\}\oplus{\rm span}\{j\cdot\varphi\}\oplus\{X\cdot \varphi\mid X\in\mathbb{R}^6\}.
\end{equation*}
Such a spinor defines a group $SU(3)\subset SO(6)$ in a sense that $SU(3)$ is a stabilizer the of $\varphi$, or equivalently, the Lie algebra that anihilates $\varphi$ is $\alg{su}(3)$. Moreover,
\begin{equation}\label{eq:algmphi}
\begin{split}
\alg{su}(3)^{\bot}\cdot\varphi &={\rm span}\{\varphi\}^{\bot},\\
\alg{u}(3)^{\bot}\cdot\varphi &={\rm span}\{\varphi,j\cdot\varphi\}^{\bot}=\{X\cdot \varphi\mid X\in\mathbb{R}^6\}.
\end{split}
\end{equation}  

\begin{exa}\label{ex:0}
Let us demonstrate above formulas on the appropriate example. Choose $\varphi=(0,0,0,0,1,0,0,0)=s_5\in\Delta$. Such a choice is determined by an Examples \ref{ex:1} and \ref{ex:2} from the last section. Then, simple calculations show that, by a realization of the spin representation \eqref{eq:spinrepresentation},
\begin{equation*}
\{X\cdot\varphi\mid X\in\mathbb{R}^6\}={\rm span}\{s_1,s_2,s_3,s_4,s_7,s_8\},\quad j\cdot\varphi=s_6
\end{equation*}
and the Lie algebra $\alg{su}(3)$ of the anihilator of the spinor $\varphi$ is generated by
\begin{equation*}
e_{13}-e_{24},\, e_{14}+e_{23},\, e_{15}+e_{26},\, e_{16}-e_{25},\, e_{35}-e_{46},\, e_{36}+e_{45},\, e_{12}+e_{34},\, e_{34}+e_{56}.
\end{equation*}
Hence, $\alg{m}=\alg{su}(3)^{\bot}$ is a span of
\begin{equation*}
e_{35}+e_{46},\, e_{36}-e_{45},\,e_{15}-e_{26},\, e_{16}+e_{25},\, e_{13}+e_{24},\, e_{14}-e_{23},\, e_{12}-e_{34}+e_{56}.
\end{equation*}
\end{exa}

Let $(M,g)$ be a $6$--dimensional spin manifold with a unit spinor (field) $\varphi$. Since the stabilizer in $SO(6)$ of a unit spinor is $SU(3)$ we get the existence of $SU(3)$--structure on $M$ \cite{ACFH}. This induces the splitting of the real spinor bundle $S$ and implies existence of an emdomorphism $S\in{\rm End}(TM)$ and a one form $\eta$ such that \cite{ACFH}
\begin{equation}\label{eq:SU3mainrel}
\nabla^g_X\varphi=S(X)\cdot\varphi+\eta(X)j\cdot\varphi.
\end{equation}
By \eqref{eq:SU3mainrel} and \eqref{eq:mainstartrelation} we have
\begin{align*}
\frac{1}{2}\nabla^g_Y(\xi_X\cdot\varphi) &=\frac{1}{2}\left((\nabla^g_Y\xi_X)\cdot\varphi+\xi_X\cdot\nabla^g_Y\varphi\right)\\
&=\frac{1}{2}\left((\nabla^g_Y\xi_X)\cdot\varphi+\xi_X\cdot S(Y)\cdot\varphi+\eta(Y)\xi_X\cdot j\cdot\varphi\right)
\end{align*}
and, on the other hand, since $j$ is $\nabla^g$--parallel,
\begin{align*}
\frac{1}{2}\nabla^g_Y(\xi_X\cdot\varphi) &=(\nabla^g_Y S)(X)\cdot\varphi+S(\nabla^g_YX)\cdot\varphi+S(X)\cdot S(Y)\cdot\varphi\\
&+\eta(Y)S(X)\cdot j\cdot\varphi+(\nabla^g_Y\eta)(X)j\cdot\varphi+\eta(\nabla^g_YX)j\cdot\varphi\\
&+\eta(X)j\cdot S(Y)\cdot\varphi-\eta(X)\eta(Y)\varphi.
\end{align*}
Hence, comparing both sides with $X=Y=e_i$ and taking into account the equality
\begin{equation*}
\frac{1}{2}\xi_{\nabla^g_{e_i}e_i}\cdot\varphi=S(\nabla^g_{e_i}e_i)\cdot\varphi+\eta(\nabla^g_{e_i}e_i)\varphi.
\end{equation*}
we get
\begin{align*}
\frac{1}{2}L\cdot\varphi &=-\frac{1}{2}\sum_i \xi_{e_i}\cdot S(e_i)\cdot\varphi-\frac{1}{2}\xi_{\eta^{\sharp}}\cdot j\cdot\varphi+({\rm div}S)\cdot\varphi-|S|^2\varphi+S(\eta^{\sharp})\cdot j\cdot\varphi\\
&+{\rm div}(\eta^{\sharp})j\cdot \varphi+j\cdot S(\eta^{\sharp})\cdot\varphi-|\eta|^2\varphi.
\end{align*}
Applying \eqref{eq:cliffordmultiplication} we obtain
\begin{equation*}
\frac{1}{2}L\cdot\varphi=\chi^S\cdot\varphi-\frac{1}{2}\xi_{\eta^{\sharp}}\cdot j\cdot\varphi+({\rm div}S)\cdot\varphi+{\rm div}(\eta^{\sharp})j\cdot \varphi+j\cdot S(\eta^{\sharp})\cdot\varphi-|\eta|^2\varphi,
\end{equation*}
where $\chi^S$ is a vector field given by
\begin{equation*}
\chi^S=\sum_i \xi_{e_i}S(e_i).
\end{equation*}

We may state and prove the main theorem of this section. Before that, let us say few words about Gray--Hervella classes of possible $SU(3)$--structures and state some additional simple observations.

In general, for any $U(n)$--structure the intrinsic torsion belongs to the space $T^{\ast}M\otimes \alg{u}(n)^{\bot}(TM)$. Under the natural action of $U(n)$ it splits into four modules, so called Gray--Hervella classes \cite{GH}, $\mathcal{W}_1\oplus\mathcal{W}_2\oplus\mathcal{W}_3\oplus\mathcal{W}_4$. For $SU(n)$ we have one additional class $\mathcal{W}_5$, which corresponds to the one form $\eta$. The case $n=3$ is special. Each module $\mathcal{W}_1$ and $\mathcal{W}_2$ split into two modules $\mathcal{W}^{\pm}_i$, $i=1,2$. Therefore, we have the following splitting
\begin{equation*}
T^{\ast}M\otimes\alg{su}(3)^{\bot}(TM)=\mathcal{W}^+_1\oplus\mathcal{W}^-_1\oplus\mathcal{W}^+_2\oplus\mathcal{W}^-_2\oplus\mathcal{W}_3\oplus\mathcal{W}_4\oplus\mathcal{W}_5.
\end{equation*}
Each class has a nice interpretation in terms of $S$ and $\eta$ (see \cite{ACFH}):
\begin{align*}
\mathcal{W}^+_1 &:\quad S=\lambda\, J_{\varphi},\quad \eta=0,\\
\mathcal{W}^-_1 &:\quad S=\mu\, {\rm Id},\quad \eta=0,\\
\mathcal{W}^+_2 &:\quad S\in\alg{su}(3),\quad \eta=0,\\
\mathcal{W}^-_2 &:\quad S\in{\rm Sym}^2_0(T^{\ast}M), SJ_{\varphi}=J_{\varphi}S,\quad\eta=0,\\
\mathcal{W}_3 &:\quad S\in{\rm Sym}^2_0(T^{\ast}M), SJ_{\varphi}=-J_{\varphi}S,\quad\eta=0,\\
\mathcal{W}_4 &:\quad S\in\Lambda^2(T^{\ast}M), SJ_{\varphi}=-J_{\varphi}S,\quad\eta=0,\\
\mathcal{W}_5 &:\quad S=0,
\end{align*}
where $\lambda,\mu$ are constants and $J_{\varphi}$ is an almost complex structure induced by $\varphi$ \cite{ACFH},
\begin{equation*}
J_{\varphi}(X)\cdot\varphi=j\cdot X\cdot\varphi,\quad X\in TM.
\end{equation*}

Assume for a while that $\eta=0$. From the definition of $J_{\varphi}$ we immediately get
\begin{equation}\label{eq:nablaJphi}
(\nabla^g_YJ_{\varphi})(X)\cdot\varphi=2S(Y)\cdot J_{\varphi}(X)\cdot\varphi+2g(J(X),S(Y))\varphi-2g(X,S(Y))j\cdot\varphi,
\end{equation}
which implies
\begin{equation}\label{eq:divJphi}
({\rm div}J_{\varphi})\cdot\varphi=2\sum_i S(e_i)\cdot J_{\varphi}(e_i)\cdot\varphi-2({\rm tr}S) j\cdot\varphi-2{\rm tr}(J_{\varphi}S)\varphi.
\end{equation}

The following lemma shows that in many cases a vector field $\chi^S$ vanishes.
\begin{lem}\label{lem:chisvanishes}
If $\eta=0$, then $\chi^S=0$.
\end{lem}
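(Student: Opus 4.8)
The plan is to show that $\chi^S = \sum_i \xi_{e_i}S(e_i)$ vanishes by exhibiting it, via the spinorial dictionary, as (twice) a vector whose Clifford product with $\varphi$ is zero, and then invoking the injectivity of Clifford multiplication by vectors on $\varphi$ (which follows from the decomposition $\Delta = \operatorname{span}\{\varphi\}\oplus\operatorname{span}\{j\cdot\varphi\}\oplus\{X\cdot\varphi\}$ recalled before Example~\ref{ex:0}). Concretely, when $\eta=0$ the basic relation \eqref{eq:SU3mainrel} reads $\nabla^g_X\varphi = S(X)\cdot\varphi$, hence $\xi_X = 2S(X)$ as an element of $\alg{m}(TM)$ acting on $\varphi$ — more precisely $\xi_X\cdot\varphi = 2S(X)\cdot\varphi$. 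So $\chi^S\cdot\varphi$ is, up to the factor, $\sum_i S(e_i)\cdot S(e_i)\cdot\varphi$ read through the identification $\xi_{e_i}\cdot(\text{vector})$; I would first make this bookkeeping precise.

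First I would write $\chi^S\cdot\varphi = \sum_i \xi_{e_i}\cdot S(e_i)\cdot\varphi$ using \eqref{eq:cliffordmultiplication} to convert the contraction $\xi_{e_i}S(e_i)$ into a Clifford bracket: since $\xi_{e_i}$ is a $2$-form, $X\lrcorner\,\xi_{e_i} = \tfrac12(S(e_i)^\flat\cdot\xi_{e_i} - \xi_{e_i}\cdot S(e_i)^\flat)$, so $\chi^S = \tfrac12\sum_i(S(e_i)^\flat\cdot\xi_{e_i} - \xi_{e_i}\cdot S(e_i)^\flat)$ as an element of the Clifford bundle. Then I would replace $\xi_{e_i}\cdot\varphi$ by $2S(e_i)\cdot\varphi$ wherever it hits $\varphi$; the term $\sum_i S(e_i)\cdot S(e_i)\cdot\varphi = c_S\cdot\varphi + \tfrac32|S|^2\varphi$ type contributions collapse because $c_S$ is a scalar on $\varphi$ — actually the cleaner route is to note $\sum_i S(e_i)\cdot S(e_i) = \sum_i g(S(e_i),S(e_i))$ only when $S$ is expressed back in an orthonormal frame adapted to $S$, i.e. diagonalizing the symmetric part; the antisymmetric part squares to a genuine $2$-form. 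So I expect the computation to reduce $\chi^S\cdot\varphi$ to $\sigma_{\xi}$-type and scalar terms already known to annihilate or act as scalars.

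An alternative and perhaps smoother route, which I would pursue in parallel: use the representation-theoretic meaning of $\chi^S$. The map $(X,Y)\mapsto \xi_X \cdot Y^\flat$ contracted over $X=e_i$, $Y=S(e_i)$ is a $U(3)$- (even $SU(3)$-) equivariant map from the intrinsic-torsion module $\mathcal{W}_1^+\oplus\mathcal{W}_1^-\oplus\mathcal{W}_2^+\oplus\mathcal{W}_2^-\oplus\mathcal{W}_3\oplus\mathcal{W}_4$ into $T M$. Since $\xi$ depends quadratically — rather, $\chi^S$ depends quadratically on $\xi$ through $S$ — I would check on each of the listed normal forms: for $\mathcal W_1^\pm$, $S$ is $\lambda J_\varphi$ or $\mu\,\mathrm{Id}$, for which $\sum_i \xi_{e_i}S(e_i)$ is manifestly proportional to $\sum_i \xi_{e_i}e_i$ composed with $J_\varphi$ or the identity, and $\sum_i\xi_{e_i}e_i$ is (up to sign) the torsion trace which in these pure classes one computes directly to be zero; for $\mathcal W_2^\pm,\mathcal W_3$ the symmetry/anti-commutation with $J_\varphi$ forces a trace-type cancellation; for $\mathcal W_4$, $S$ is a $2$-form anticommuting with $J_\varphi$ and the contraction is odd in a way that kills it. The main obstacle I anticipate is the $\mathcal W_4$ (and general off-diagonal) bookkeeping: keeping straight which Clifford products land in $\operatorname{span}\{\varphi\}$, which in $\operatorname{span}\{j\cdot\varphi\}$, and which in the $\{X\cdot\varphi\}$ summand, so as to legitimately conclude $\chi^S\cdot\varphi=0\Rightarrow\chi^S=0$ — this requires care because $\chi^S\cdot\varphi$ a priori has components in all three pieces, and one must show the $\varphi$- and $j\cdot\varphi$-components separately vanish using that $\chi^S$ is a genuine vector (so $\chi^S\cdot\varphi$ lies in $\{X\cdot\varphi\}$ automatically) and the $\{X\cdot\varphi\}$-component vanishes by the explicit quadratic identity.
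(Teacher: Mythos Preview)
Your proposal has a genuine gap. The first route---expressing $\chi^S\cdot\varphi$ via the Clifford commutator \eqref{eq:cliffordmultiplication} and then substituting $\xi_{e_i}\cdot\varphi=2S(e_i)\cdot\varphi$---is circular. Carrying it out: from $2\chi^S=\sum_i\bigl(S(e_i)\cdot\xi_{e_i}-\xi_{e_i}\cdot S(e_i)\bigr)$ one gets
\[
2\chi^S\cdot\varphi=\sum_i S(e_i)\cdot\xi_{e_i}\cdot\varphi-\sum_i\xi_{e_i}\cdot S(e_i)\cdot\varphi
= -2|S|^2\varphi-\sum_i\xi_{e_i}\cdot S(e_i)\cdot\varphi,
\]
but the only way to evaluate the remaining term is to commute $\xi_{e_i}$ back past $S(e_i)$, which reintroduces $\chi^S$ and yields the tautology $2\chi^S\cdot\varphi=2\chi^S\cdot\varphi$. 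No combination of \eqref{eq:cliffordmultiplication} and $\xi_X\cdot\varphi=2S(X)\cdot\varphi$ alone can close this loop; you need an independent structural fact about $\xi$.

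Your second route---checking the Gray--Hervella classes one by one---also fails as stated, because $\chi^S$ is \emph{quadratic} in the intrinsic torsion: if $S=\sum_a S_a$ decomposes by type, then $\chi^S$ contains cross terms $\sum_i\xi^a_{e_i}S_b(e_i)$ with $a\neq b$, and verifying the diagonal terms vanish says nothing about these. The missing idea, which the paper supplies, is that when $\eta=0$ the intrinsic torsion has the explicit form $g(\xi_XY,Z)=\psi_\varphi(S(X),Y,Z)$ for the $3$--form $\psi_\varphi$; skew--symmetry of $\psi_\varphi$ then gives $\xi_XS(X)=0$ for \emph{every} $X$, so each summand $\xi_{e_i}S(e_i)$ vanishes individually. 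This is a one--line argument that bypasses both the Clifford bookkeeping and any case analysis.
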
 
\begin{proof}[First proof]
Since $\eta=0$, the intrinsic torsion $\xi$ may be described as $g(\xi_YY,Z)=\psi_{\varphi}(S(X),Y,Z)$, where $\psi_{\varphi}$ is a $3$--form induced by $\varphi$, $\psi_{\varphi}(X,Y,Z)=-\skal{X\cdot Y\cdot Z\cdot \varphi}{\varphi}$ \cite{ACFH}. Thus $\xi_X S(X)=0$ for any $X\in TM$. In particular, $\chi^S$ vanishes. 
\end{proof}
\begin{proof}[Second proof]
By the assumption $\eta=0$, the intrinsic torsion is in fact the intrinsic torsion of corresponding $U(3)$--structure. It is well known that in this case
\begin{equation*}
\xi_XY=-\frac{1}{2}J_{\varphi}(\nabla^g_XJ_{\varphi})Y.
\end{equation*}
Thus, by \eqref{eq:nablaJphi} and the definition of $J_{\varphi}$
\begin{align*}
2J_{\varphi}(\chi^S) &=\sum_i (\nabla^g_{e_i}J_{\varphi})S(e_i)\\
&=2\sum_i S(e_i)\cdot J_{\varphi}(S(e_i))-2|S|^2j\cdot\varphi\\
&=2\sum_i S(e_i)\cdot j\cdot S(e_i)\cdot\varphi-2|S|^2j\cdot\varphi\\
&=0.
\end{align*}
Hence $\chi^S=0$.
\end{proof}

The main theorem of this section reads as follows.
\begin{thm}\label{thm:SU3harmonic}
A $SU(3)$--structure on a $6$--dimensional spin manifold induced by a unit spinor $\varphi$ is harmonic if and only if the following condition holds
\begin{equation}\label{eq:harmonicSU3}
\chi^S\cdot\varphi-\frac{1}{2}\xi_{\eta^{\sharp}}\cdot j\cdot\varphi+({\rm div}S)\cdot\varphi
+{\rm div}(\eta^{\sharp})j\cdot \varphi+j\cdot S(\eta^{\sharp})\cdot\varphi-|\eta|^2\varphi=0.
\end{equation}
If $\eta=0$, then harmonicity is equivalent to ${\rm div}S=0$. In particular, if the intrinsic torsion $\xi$ belongs to the $\mathcal{W}^+_1\oplus\mathcal{W}^-_1$ class or to the pure class $\mathcal{W}_3$, then a $SU(3)$--structure is harmonic.
\end{thm}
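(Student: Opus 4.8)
The plan is to start from the identity for $\tfrac12 L\cdot\varphi$ derived just before the theorem, namely
\begin{equation*}
\tfrac12 L\cdot\varphi=\chi^S\cdot\varphi-\tfrac12\xi_{\eta^{\sharp}}\cdot j\cdot\varphi+({\rm div}S)\cdot\varphi+{\rm div}(\eta^{\sharp})j\cdot\varphi+j\cdot S(\eta^{\sharp})\cdot\varphi-|\eta|^2\varphi,
\end{equation*}
and to invoke the injectivity of the action of $\alg{m}(TM)$ on $\varphi$ recorded in Section~2: $L\in\alg{m}(TM)$, so $L=0$ if and only if $L\cdot\varphi=0$. Hence harmonicity is equivalent to the vanishing of the right-hand side above, which is exactly \eqref{eq:harmonicSU3}. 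So the first part is essentially a restatement of the computation already done; I would just assemble it cleanly and point to equation \eqref{eq:harmonicGstrspinor} and the discussion around \eqref{eq:algmphi} for the injectivity.

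For the case $\eta=0$: substituting $\eta=0$ kills the second, fourth, fifth and sixth terms, leaving $\tfrac12 L\cdot\varphi=\chi^S\cdot\varphi+({\rm div}S)\cdot\varphi$. By Lemma~\ref{lem:chisvanishes} we have $\chi^S=0$, so $\tfrac12 L\cdot\varphi=({\rm div}S)\cdot\varphi$. Now ${\rm div}S=\sum_i(\nabla^g_{e_i}S)(e_i)$ is a vector field, and Clifford multiplication by a nonzero vector sends $\varphi$ to a nonzero element of the subspace $\{X\cdot\varphi\}$ which is orthogonal to $\varphi$; in particular $X\cdot\varphi=0$ forces $X=0$. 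Therefore $L\cdot\varphi=0$ iff ${\rm div}S=0$, giving the stated equivalence.

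For the final claim about the Gray--Hervella classes I would check the two cases directly against the dictionary for $\mathcal W_i$ in terms of $S$ and $\eta$ given in the text. In class $\mathcal W_1^+\oplus\mathcal W_1^-$ we have $\eta=0$ and $S=\lambda J_\varphi+\mu\,{\rm Id}$ with $\lambda,\mu$ \emph{constant}; then ${\rm div}S=\lambda\,{\rm div}J_\varphi+\mu\,{\rm div}({\rm Id})$, the second term vanishes since ${\rm Id}$ is parallel, and for the first I would use \eqref{eq:divJphi}: with $S=\lambda J_\varphi+\mu\,{\rm Id}$ one computes $\sum_i S(e_i)\cdot J_\varphi(e_i)\cdot\varphi$, ${\rm tr}S$ and ${\rm tr}(J_\varphi S)$ and observes they conspire so that $({\rm div}J_\varphi)\cdot\varphi=0$ (using $J_\varphi^2=-{\rm Id}$, ${\rm tr}\,J_\varphi=0$, $\sum_i e_i\cdot J_\varphi(e_i)\cdot\varphi=\sum_i e_i\cdot j\cdot e_i\cdot\varphi$ which is a multiple of $j\cdot\varphi$, and the skew/symmetric type of the relevant terms). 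For the pure class $\mathcal W_3$ we have $\eta=0$ and $S\in{\rm Sym}^2_0$, i.e. $S$ is symmetric and trace-free, so it is well known (and can be seen from $\xi$ being of the pure $\mathcal W_3$ type, hence $T^G$ a primitive $(3,0)+(0,3)$-form with constant norm arguments, or more simply from the representation-theoretic fact that $\mathcal W_3$ is a module on which the divergence map — an equivariant map into $\alg{m}(TM)$ — must vanish because there is no $\mathcal W_3$ summand in the target) that ${\rm div}S=0$; I would give the short direct computation using that $\mathcal W_3$ consists of the trace-free, $J_\varphi$-anti-invariant symmetric part, so ${\rm tr}S={\rm tr}(J_\varphi S)=0$ and the remaining divergence term is orthogonal to $\varphi$ yet must lie along $\varphi$, forcing it to vanish. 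The main obstacle I expect is the $\mathcal W_1$ computation: one has to be careful that it is precisely the \emph{constancy} of $\lambda,\mu$ that makes ${\rm div}S$ collapse, and that the $j\cdot\varphi$- and $\varphi$-components coming out of \eqref{eq:divJphi} cancel — this is where a sign or a trace identity for $J_\varphi$ could easily go wrong, so I would double-check it against Example~\ref{ex:0}.
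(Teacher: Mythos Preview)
Your argument for the first assertion, for the $\eta=0$ case, and for the class $\mathcal{W}_1^+\oplus\mathcal{W}_1^-$ coincides with the paper's and is correct; in particular, for $\mathcal{W}_1^+\oplus\mathcal{W}_1^-$ the paper does exactly what you propose: reduce to ${\rm div}J_\varphi=0$ and read this off from \eqref{eq:divJphi}. Your worry about cancellations there is unfounded---plugging $S=\lambda J_\varphi+\mu\,{\rm Id}$ into \eqref{eq:divJphi} gives $({\rm div}J_\varphi)\cdot\varphi=0$ after a two-line computation.

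The genuine gap is in your treatment of $\mathcal{W}_3$. Neither of your two sketches proves ${\rm div}S=0$. The ``representation-theoretic'' claim that the divergence is an equivariant algebraic map from $\mathcal{W}_3$ to a module with no $\mathcal{W}_3$-summand does not apply: ${\rm div}S=\sum_i(\nabla^g_{e_i}S)(e_i)$ is a first-order \emph{differential} operator, and Schur-type arguments on irreducible $SU(3)$-modules say nothing about it. Your alternative ``direct computation'' (``the remaining divergence term is orthogonal to $\varphi$ yet must lie along $\varphi$'') is not an argument: $({\rm div}S)\cdot\varphi$ sits in $\{X\cdot\varphi\}$, which is orthogonal to $\varphi$ and $j\cdot\varphi$, and there is no a priori reason for it to be a multiple of $\varphi$.

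The paper bypasses ${\rm div}S=0$ entirely in the $\mathcal{W}_3$ case. It observes that $S$ symmetric and trace-free gives $\sum_i e_i\cdot S(e_i)\cdot\varphi=0$, hence $D\varphi=\sum_i e_i\cdot\nabla^g_{e_i}\varphi=0$, so $\varphi$ is a harmonic spinor. Then it invokes Corollary~\ref{cor:general}: harmonicity follows once $c_\xi$ acts on $\varphi$ as a scalar. Using \eqref{eq:cliffordmultiplication} and Lemma~\ref{lem:chisvanishes} one computes $2c_\xi\cdot\varphi=\sum_i\xi_{e_i}\cdot\xi_{e_i}\cdot\varphi=-4|S|^2\varphi$, which is scalar, and the proof is complete. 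Of course, a posteriori ${\rm div}S=0$ holds by the $\eta=0$ equivalence, but it is obtained as a \emph{consequence} of harmonicity, not as the route to it.
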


\begin{proof}
The only thing which is left to prove is harmonicity of $SU(3)$--structure belonging to the mentioned classes.

$\mathcal{W}^+_1\oplus\mathcal{W}^-_1$ case: In this case, $S=\lambda J_{\varphi}+\mu {\rm Id}$ for two constants $\lambda,\mu$. Then, ${\rm div}(S)=\lambda{\rm div} J_{\varphi}$, thus it suffices to show that the divergence of $J_{\varphi}$ vanishes, but this follows immediately by \eqref{eq:divJphi}.

$\mathcal{W}_3$ case: Since $S$ is symmetric and traceless, it follows that $\sum_i e_i\cdot S(e_i)\cdot\varphi=0$. Thus $D\varphi=0$, where $D$ is the Dirac operator. By Corollary \ref{cor:general}, it suffices to prove that $c_{\xi}$ acts on $\varphi$ by a scalar. Using \eqref{eq:cliffordmultiplication} we obtain
\begin{align*}
2c_{\xi}\cdot\varphi &=\sum_i \xi_{e_i}\cdot\xi_{e_i}\cdot\varphi
=2\sum_i \xi_{e_i}\cdot S(e_i)\cdot\varphi\\
&=2\sum_i (S(e_i)\cdot\xi_{e_i}\cdot\varphi+2\chi^S\cdot\varphi)
=-4|S|^2\varphi,
\end{align*}
since by Lemma \ref{lem:chisvanishes}, $\chi^S$ vanishes.
\end{proof}

\begin{rem}
The harmonicity condition in Theorem \ref{thm:SU3harmonic} can be stated in, maybe, more elegant way, however, less applicable for our further considerations (see the section concerning examples). Namely, introduce an operation (commutator) $[\omega,\tau]$, for any $2$--forms $\omega$ and $\tau$by (compare \cite{PN} and \cite{Ag0})
\begin{equation*}
[\omega,\tau]=\sum_i (e_i\lrcorner\omega)\wedge (e_i\lrcorner\tau).
\end{equation*}  
Then $[\omega,\tau]$ is a $2$--form with the following action on spinors 
\begin{equation*}
\omega\cdot\tau-\tau\cdot\omega=2[\omega,\tau].
\end{equation*} 
Since the element $j$ corresponds to $J_{\varphi}$ treated as a K\"ahler form, we have
\begin{equation*}
2[\xi_{\eta^{\sharp}},J_{\varphi}]\cdot\varphi=\xi_{\eta^{\sharp}}\cdot j\cdot\varphi-2j\cdot S(\eta^{\sharp})\cdot\varphi+2|\eta|^2\varphi. 
\end{equation*}
Hence, condition \eqref{eq:harmonicSU3} reads as
\begin{equation*}
\chi^S\cdot\varphi-[\xi_{\eta^{\sharp}}, J_{\varphi}]\cdot\varphi+({\rm div}S)\cdot\varphi
+{\rm div}(\eta^{\sharp})j\cdot \varphi=0.
\end{equation*}
\end{rem}

\begin{rem}
The fact that in general the $U(n)$--structure of Gray--Hervella pure classes $\mathcal{W}_1$ or $\mathcal{W}_3$ or $\mathcal{W}_4$ is harmonic was proved, without spinorial approach, in \cite{GDMC}. Our approach is based only on the definitions of $J_{\varphi}$ and $S$ by spinorial approach in \cite{ACFH}. We managed only to show that for $\mathcal{W}_1$ and $\mathcal{W}_3$ we have harmonic structures. In the case $\mathcal{W}_4$ we are only able to establish the correspondence with the classical definition of this class. Let us enlarge on this.

We have that $S$ is skew--symmetric and $SJ_{\varphi}=-J_{\varphi}S$. In other words, $S\in{\rm u}(3)^{\bot}(TM)$. Hence, see \eqref{eq:algmphi}, there is a unique vector field $Z_{\varphi}$ such that
\begin{equation}\label{eq:Leefield}
S\cdot\varphi=Z_{\varphi}\cdot\varphi.
\end{equation}
Let us first describe $Z_{\varphi}$. In this case, \eqref{eq:divJphi} reduces to
\begin{equation}\label{eq:divJphi2}
({\rm div}J_{\varphi})\cdot\varphi=2\sum_i S(e_i)\cdot J_{\varphi}(e_i)\cdot\varphi.
\end{equation} 
Moreover, $\sum_i S(e_i)\cdot e_i=2S$ as acting on spinors. Applying $j$ to both sides, and using \eqref{eq:Leefield}, we have
\begin{equation*}
\sum_i S(e_i)\cdot J_{\varphi}(e_i)\cdot\varphi=-2 j\cdot S\cdot\varphi=-2 J_{\varphi}(Z_{\varphi}),
\end{equation*}
which by \eqref{eq:divJphi2} implies
\begin{equation*}
J({\rm div} J)=4Z_{\varphi}.
\end{equation*}
Therefore, $4Z_{\varphi}$ is a Lee vector field of a locally conformally K\"ahler structure \cite{Va1}. Differentiating the relation \eqref{eq:Leefield} in the direction of $e_i$ and then multiplying by $e_i$ we obtain after some tedious calculations
\begin{equation*}
\sum_i e_i\cdot(\nabla_{e_i}S)\cdot\varphi-\sum_i e_i\cdot(\nabla_{e_i}Z_{\varphi})\cdot\varphi=-4|Z_{\varphi}|^2\varphi-6S(Z_{\varphi})\cdot\varphi+2|S|^2\varphi.
\end{equation*} 
On the other hand, by \eqref{eq:cliffordmultiplication}, $2S(e_i)\cdot\varphi=e_i\cdot Z_{\varphi}\cdot\varphi-S\cdot e_i\cdot\varphi$, which implies
\begin{equation*}
\sum_i e_i\cdot(\nabla_{e_i}S)\cdot\varphi-\sum_i e_i\cdot(\nabla_{e_i}Z_{\varphi})\cdot\varphi=4|Z_{\varphi}|^2\varphi+6S(Z_{\varphi})\cdot\varphi.
\end{equation*}
Comparing last two relations we get
\begin{equation*}
S(Z_{\varphi})=0\quad\textrm{and}\quad |S|^2=4|Z_{\varphi}|^2.
\end{equation*}
Thus, by \eqref{eq:nablaJphi} we have
\begin{equation*}
(\nabla_X J_{\varphi})(J_{\varphi}Z_{\varphi})\cdot\varphi=-2S(X)\cdot Z_{\varphi}\cdot\varphi.
\end{equation*}
The operator $X\mapsto (\nabla_X J_{\varphi})(J_{\varphi}Z_{\varphi})$ is skew--symmetric, hence can be considered as a $2$--form. Up to a constant factor this form has been considered by Vaisman \cite{Va2}.
\end{rem}

\section{$G_2$--structures}

Analogously as in dimension $6$, the spin representation $\rho:{\rm Spin}(7)\to\Delta=\Delta_7=\mathbb{R}^8$ is real and can be realized identically as in the $6$--dimensional case with additional action of $e_7$ given by
\begin{equation*}
e_7=+E_{12}-E_{34}-E_{56}+E_{78}.
\end{equation*}
Fix a unit spinor $\varphi\in\Delta$. By dimensional reasons we have
\begin{equation*}
\Delta={\rm span}\{\varphi\}\oplus\{X\cdot\varphi\mid X\in\mathbb{R}^7\}.
\end{equation*}

\begin{exa}\label{ex:00}
Analogously, as in the $SU(3)$ case, let us do some calculations in a concrete example. Choose a unit spinor $\varphi=s_5\in \Delta$. Then, the Lie algebra, which anihilates $\varphi$ via Clifford multiplication of $2$--forms via the realization of the real spin representation is spanned by elements
\begin{align*}
& e_{16}+e_{37},\quad e_{16}-e_{25},\quad e_{15}+e_{26},\quad e_{26}+e_{47},\quad e_{17}-e_{36},\quad e_{17}+e_{45},\quad e_{27}-e_{35},\\
& e_{27}-e_{46},\quad e_{12}+e_{34},\quad e_{12}-e_{56},\quad e_{13}-e_{24},\quad e_{13}-e_{67},\quad e_{14}+e_{23},\quad e_{14}+e_{57}.
\end{align*}
Hence it is $\alg{g}_2$. Moreover, its orthogonal complement $\alg{m}=\alg{g}_2^{\bot}$ in $\alg{so}(7)$ is spanned by
\begin{align*}
& e_{16}-e_{37}+e_{25},\quad e_{15}-e_{26}+e_{47},\quad e_{17}+e_{36}-e_{45},\quad e_{27}+e_{35}+e_{46},\\
& e_{12}-e_{34}+e_{56},\quad e_{13}+e_{24}+e_{67},\quad e_{14}-e_{23}-e_{57}.
\end{align*}
\end{exa}

Let $(M,g)$ be a $7$--dimensional spin manifold with the spinor bundle $S$ and a unit spinor (field) $\varphi$. Above decomposition induces a splitting of the spinor bundle and implies the existence of an endomorphism $S\in{\rm End}(TM)$ such that
\begin{equation}
\nabla_X\varphi=S(X)\cdot\varphi.
\end{equation}
Since a stabilizer in $SO(7)$ of a unit spinor  is $G_2$, $(M,g,\varphi)$ becomes a $G_2$--structure \cite{ACFH}. The harmonicity condition, or more generally, the formula for a tensor $L$ becomes, just putting $\eta=0$ in the $SU(3)$ case,
\begin{equation*}
\frac{1}{2} L\cdot\varphi=({\rm div}S)\cdot\varphi.
\end{equation*}
Notice, that in $G_2$ case the vector field $\chi^S$ vanishes, since the intrinsic torsion may be described as follows $g(\xi_XY,Z)=\frac{2}{3}\psi_{\varphi}(S(X),Y,Z)$ with a $3$--form $\psi_{\varphi}$ defined in the same way as in the $SU(3)$ case. Thus we have the following observation.
\begin{thm}\label{thm:G2harmonic}
A $G_2$--structure on a spin $7$--dimensional manifold is harmonic if and only if ${\rm div}S=0$.
\end{thm}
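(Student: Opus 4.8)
The plan is to reduce the $G_2$ case to the computation already carried out for $SU(3)$ by setting $\eta=0$, and then to invoke the general characterization of harmonicity via the intrinsic torsion. First I would observe that, exactly as in the $SU(3)$ case, differentiating the relation $\nabla^g_X\varphi=S(X)\cdot\varphi$ a second time and comparing with $\nabla^g_X\varphi=\tfrac12\xi_X\cdot\varphi$ from \eqref{eq:mainstartrelation} yields
\begin{equation*}
\frac12 L\cdot\varphi=\chi^S\cdot\varphi+({\rm div}S)\cdot\varphi,
\end{equation*}
where $\chi^S=\sum_i\xi_{e_i}S(e_i)$; all the $\eta$--terms drop out because $\eta\equiv 0$ by dimensional reasons in dimension $7$. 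The key point is then that $\chi^S=0$. This follows from the description $g(\xi_XY,Z)=\tfrac23\psi_{\varphi}(S(X),Y,Z)$ with the totally skew $3$--form $\psi_{\varphi}(X,Y,Z)=-\skal{X\cdot Y\cdot Z\cdot\varphi}{\varphi}$: since $\psi_{\varphi}$ is alternating, $g(\xi_X S(X),Z)=\tfrac23\psi_{\varphi}(S(X),S(X),Z)=0$ for every $X$, hence $\xi_X S(X)=0$ pointwise and a fortiori $\chi^S=\sum_i\xi_{e_i}S(e_i)=0$. This is the $G_2$ analogue of Lemma \ref{lem:chisvanishes}, and it is really the only computational input needed.

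With $\chi^S=0$ in hand we get $\tfrac12 L\cdot\varphi=({\rm div}S)\cdot\varphi$. Now I would recall from the general discussion in Section 2 that $L\in\alg{m}(TM)$ and that the action of $\alg{m}(TM)$ on the defining spinor $\varphi$ is injective (here $\alg{m}=\alg{g}_2^{\bot}$ in $\alg{so}(7)$); consequently $L=0$ if and only if $L\cdot\varphi=0$. Moreover ${\rm div}S$ is a vector field and its Clifford action on $\varphi$ is injective on $\{X\cdot\varphi\mid X\in\mathbb{R}^7\}$, which by the dimensional decomposition $\Delta={\rm span}\{\varphi\}\oplus\{X\cdot\varphi\}$ is a complement of $\mathbb{R}\varphi$; in particular ${\rm div}S\cdot\varphi=0$ forces ${\rm div}S=0$. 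Therefore harmonicity of the $G_2$--structure, which by \eqref{eq:harmonicGstrspinor} is the condition $L\cdot\varphi=0$, is equivalent to ${\rm div}S=0$.

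I do not expect a serious obstacle here: the statement is essentially a corollary of the $SU(3)$ computation specialized to $\eta=0$, together with the vanishing of $\chi^S$. The one place that deserves a careful word is the claim that $\chi^S$ vanishes — one must make sure the formula $g(\xi_XY,Z)=\tfrac23\psi_{\varphi}(S(X),Y,Z)$ (from \cite{ACFH}) is being used correctly and that the total skew--symmetry of $\psi_{\varphi}$ really gives $\xi_XS(X)=0$ rather than merely some weaker identity; granting that, the rest is the injectivity bookkeeping already established in Section 2. So the proof is short: reproduce the second--derivative computation with $\eta=0$, cite the $\chi^S=0$ fact, and conclude via the injectivity of the $\alg{m}(TM)$--action on $\varphi$.
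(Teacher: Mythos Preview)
Your proposal is correct and follows essentially the same approach as the paper: the paper also obtains $\tfrac12 L\cdot\varphi=({\rm div}S)\cdot\varphi$ by specializing the $SU(3)$ computation to $\eta=0$ and killing $\chi^S$ via the identity $g(\xi_XY,Z)=\tfrac23\psi_{\varphi}(S(X),Y,Z)$, then concludes by the injectivity established in Section~2. Your writeup is in fact slightly more explicit than the paper's, spelling out why $({\rm div}S)\cdot\varphi=0$ forces ${\rm div}S=0$.
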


For a $G_2$--structure the space of all possible intrinsic torsions $T^{\ast}M\otimes\alg{g}_2^{\bot}(TM)$ splits into four irreducible modules $\mathcal{W}_1,\ldots,\mathcal{W}_4$ \cite{ACFH}:
\begin{align*}
\mathcal{W}_1 & :\quad S=\lambda{\rm Id},\\
\mathcal{W}_2 & :\quad S\in\alg{g}_2,\\
\mathcal{W}_3 & :\quad S\in{\rm Sym}^2_0(T^{\ast}M),\\
\mathcal{W}_4 & :\quad S=V\lrcorner\psi_{\varphi},\quad V\in TM,
\end{align*}
where $\psi_{\varphi}$ is a $3$--form defined as $\pi_{\varphi}(X,Y,Z)=\skal{X\cdot Y\cdot Z\cdot\varphi}{\varphi}$.  The class for which the condition of harmonicity may be explicitly described is $\mathcal{W}_1$ defined by the condition $S=\lambda{\rm Id}$, where $\lambda$ is a smooth function. Then ${\rm div}S={\rm grad}(\lambda)$. Hence, a $G_2$--structure in $\mathcal{W}_1$ class is harmonic if and only if $\lambda$ is constant. Moreover, by the same lines as in the proof of Theorem \ref{thm:SU3harmonic}, we see that a $G_2$--structure belonging to a $\mathcal{W}_3$ class is harmonic. Hence wa may state the following fact.

\begin{cor}\label{cor:G2str}
A $G_2$--structure belonging to pure class
\begin{enumerate}
\item $\mathcal{W}_1$ is harmonic if and only if $\lambda$ is a constant, i.e., a $G_2$--structure is nearly parallel, 
\item $\mathcal{W}_3$ is harmonic.
\end{enumerate} 
\end{cor}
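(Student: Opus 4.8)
The plan is to treat Corollary~\ref{cor:G2str} as an immediate consequence of Theorem~\ref{thm:G2harmonic}, which already reduces harmonicity of any $G_2$--structure to the single equation $\mathrm{div}\,S=0$. So the whole task is to evaluate $\mathrm{div}\,S=\sum_i(\nabla^g_{e_i}S)(e_i)$ for the two pure classes in question and show it vanishes.

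For part (1), the class $\mathcal{W}_1$, I would start from $S=\lambda\,\mathrm{Id}$ with $\lambda$ a smooth function. Then $(\nabla^g_X S)(Y)=(X\lambda)\,Y$, so $\mathrm{div}\,S=\sum_i (e_i\lambda)\,e_i=\mathrm{grad}\,\lambda$. Hence $\mathrm{div}\,S=0$ if and only if $\mathrm{grad}\,\lambda=0$, i.e.\ $\lambda$ is (locally) constant. That a $\mathcal{W}_1$--structure with constant $\lambda$ is exactly a nearly parallel $G_2$--structure is the standard identification: from $\nabla^g_X\varphi=\lambda\,X\cdot\varphi$ one recovers the nearly parallel Killing equation for $\varphi$, equivalently $d\psi_\varphi$ proportional to the Hodge dual of $\psi_\varphi$ with constant factor. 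I would just cite this well--known fact rather than re-derive it.

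For part (2), the class $\mathcal{W}_3$, the argument mirrors the $\mathcal{W}_3$ case in the proof of Theorem~\ref{thm:SU3harmonic}. Here $S$ is a symmetric traceless endomorphism, so $\sum_i e_i\cdot S(e_i)\cdot\varphi=0$ (the Clifford product of a symmetric endomorphism with the vectors collapses to a multiple of the trace, which is zero), and therefore $D\varphi=\sum_i e_i\cdot\nabla^g_{e_i}\varphi=\sum_i e_i\cdot S(e_i)\cdot\varphi=0$, so $\varphi$ is a harmonic spinor. By Corollary~\ref{cor:general} it then suffices to show that $c_\xi$ acts on $\varphi$ as a scalar. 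Using $\chi^S=0$ in the $G_2$ case (noted right before Theorem~\ref{thm:G2harmonic}, since $g(\xi_XY,Z)=\tfrac{2}{3}\psi_\varphi(S(X),Y,Z)$ forces $\xi_XS(X)=0$) and $g(\xi_XY,Z)=\tfrac{2}{3}\psi_\varphi(S(X),Y,Z)$, one computes, exactly as in the $SU(3)$ argument, $2c_\xi\cdot\varphi=\sum_i\xi_{e_i}\cdot\xi_{e_i}\cdot\varphi=2\sum_i\xi_{e_i}\cdot S(e_i)\cdot\varphi$ up to the relevant constant, and commuting the Clifford factors via \eqref{eq:cliffordmultiplication} and using $\chi^S=0$ leaves only a scalar multiple of $|S|^2\varphi$. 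Hence $c_\xi$ acts as a scalar, Corollary~\ref{cor:general} applies, and the structure is harmonic.

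The only genuine subtlety — the ``main obstacle'' — is getting the normalization constant in $g(\xi_XY,Z)=\tfrac23\psi_\varphi(S(X),Y,Z)$ consistent with the definition $\nabla^g_X\varphi=S(X)\cdot\varphi$ and with the relation $\nabla^g_X\varphi=\tfrac12\xi_X\cdot\varphi$ of \eqref{eq:mainstartrelation}; this forces $\xi_X=2S(X)$ as endomorphisms acting on $\varphi$ modulo the kernel, and one must track the factor carefully so that the $\mathcal{W}_3$ computation indeed terminates in a pure scalar. Everything else is a direct transcription of the dimension--six argument with $\eta=0$, and part (1) is an elementary divergence computation.
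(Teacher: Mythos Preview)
Your proposal is correct and follows essentially the same approach as the paper. The paper handles $\mathcal{W}_1$ by computing $\mathrm{div}\,S=\mathrm{grad}\,\lambda$ exactly as you do, and for $\mathcal{W}_3$ it explicitly says ``by the same lines as in the proof of Theorem~\ref{thm:SU3harmonic}'', i.e.\ the Dirac--harmonic/$c_\xi$--scalar argument via Corollary~\ref{cor:general} that you spell out; your remark on the $\tfrac{2}{3}$ normalization is the only bookkeeping the paper leaves implicit.
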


\begin{rem}\label{rem:Agricola}
Some of the results, especially second part of Theorem \ref{thm:SU3harmonic} and Corollary \ref{cor:G2str}(1) may be obtained with an alternative approach communicated to the author by I. Agricola. Namely, assuming that a $SU(3)$ or a $G_2$--structure admits a characteristic connection $\nabla^c$ (see \cite[p. 45]{Ag0} for a definition), which holds for considered structures excluding $\mathcal{W}_2$ cases \cite{NS,FI}, the harmonicity condition by \cite[Theorem 3.7]{GDMC} is equivalent to $\delta T^c=0$, where $T^c$ is a torsion of a characteristic connection (characteristic torsion). In particular, if $\nabla^c T^c=0$, then a considered structure is harmonic. It is known that nearly K\"ahler and nearly parallel $G_2$--structures admit parallel characteristic connection \cite{VK,TF}, thus are harmonic as $U(3)$ and $G_2$--structures, respectively.
\end{rem}

\section{Examples}

We justify described above theory on appropriate examples. We begin with a suitable introduction. We rely on \cite{BFGK}. Consider a homogeneous space $M=K/H$, where $K$ is a compact, connected Lie group and $H$ its closed subgroup. Denote by $\alg{k}$ and $\alg{h}$ the lie algebras of $K$ and $H$, respectively. Assume we have a decomposition $\alg{k}=\alg{h}\oplus\alg{n}$, where $\alg{n}$ is an orthogonal complement of $\alg{h}$ with respect to and ${\rm ad}(H)$--invariant positive bilinear form ${\bf B}$ on $\alg{k}$. Then ${\bf B}$ induces a Riemannian metric $g$ on $M$. By a well known theorem by Wang the Levi--Civita connection $\nabla^g$ is identified with an invariant linear map $\Lambda:\alg{n}\to\alg{so}(\alg{n})$. 

Denote by $\lambda:H\to{\rm SO}(\alg{n})$ the isotropy representation. Notice, that a tangent bundle of $M$ may be described as $TM=K\times_{\lambda}\alg{n}$ and hence any tensor bundle $T^{\otimes k}M$ equals $K\times_{\lambda}\alg{n}^{\otimes k}$, etc. Consider additionally a $G$--structure on $M$. Then we have a splitting $\alg{so}(\alg{n})=\alg{g}\oplus\alg{m}$. The intrinsic torsion $\xi$ is a section of a bundle $K\times_{\lambda}(\alg{n}^{\ast}\otimes\alg{m})$. Since, there is a bijection between sections of the associated bundle $K\times_{\tau}V$, for a representation $\tau:H\to{\rm End}(V)$ and $\tau$--invariant functions $f:K\to V$, it follows that $\xi$ may be considered as an invariant function $f_{\xi}:K\to\alg{n}^{\ast}\otimes\alg{m}$,
\begin{equation}\label{eq:fxi}
f_{\xi}(g)={\rm ad}(g^{-1})\Lambda_{\alg{m}},
\end{equation}
where $\Lambda_{\alg{m}}$ is a $\alg{m}$--component of $\Lambda$.  

Following \cite{BFGK}, let us introduce a spin structure on $M$. Assume that there is the lift $\tilde{\lambda}:H\to {\rm Spin}(\alg{n})$, i.e., $\pi\circ\tilde{\lambda}=\lambda$, where $\pi:{\rm Spin}(\alg{n})\to SO(\alg{n})$ is a double covering. Then $M$ admits a spin structure, namely ${\rm Spin}(M)=K\times_{\tilde{\lambda}}{\rm Spin}(\alg{n})$. The connection on the spinor bundle $S=K\times_{\rho\tilde{\lambda}}\Delta$, where $\rho:{\rm Spin}(\alg{n})\to{\rm End}(\Delta)$ is a real spin representation, induced from the Levi--Civita connection is therefore identified with an invariant linear map $\tilde{\Lambda}:\alg{n}\to\alg{spin}(\alg{n})$ via the correspondence
\begin{equation*}
\pi_{\ast}\circ\tilde{\Lambda}=\Lambda.
\end{equation*}

Assume there is a isotropy invariant unit spinor $\varphi_0$. Thus, as a constant function $f_{\varphi}(k)=\varphi_0$, it induces a global unit spinor $\varphi$ on the spinor bundle $S$ over $M$. Then, by the Ikeda result \cite{Ik}, $\nabla_X\varphi$ corresponds to
\begin{equation*}
f_X(f_{\varphi})+\rho_{\ast}(\tilde{\Lambda}(f_X))f_{\varphi}.
\end{equation*}
Since $f_{\varphi}$ is constant, the first element vanishes. Therefore, the spinorial laplacian of $\varphi$ corresponds to
\begin{equation}\label{eq:spinorlaplacianhomog}
-\sum_i \rho_{\ast}(\tilde{\Lambda}(E_i))\rho_{\ast}(\tilde{\Lambda}(E_i))\varphi_0.
\end{equation}
Moreover, the element $c_{\xi}$ by a definition equals
\begin{equation}\label{eq:cxihomog}
-\frac{1}{2}\sum_i\rho_{\ast}(\tilde{\Lambda}_{\alg{m}}(E_i))\rho_{\ast}(\tilde{\Lambda}_{\alg{m}}(E_i))\varphi_0.
\end{equation} 

\begin{prop}\label{prop:homog}
Let $M=K/H$ be a reductive homogeneous space with a spin structure induced by the lift $\tilde{\lambda}$ of the isotropy representation $\lambda$. Assume that a $G$--structure on $M$ is induced by a spinor, which is a fixed point of $\lambda$. If a minimal connection $\nabla^G$ is induced by a zero map $\Lambda_{\alg{g}}:\alg{n}\to\alg{g}$, where $\alg{k}=\alg{h}\oplus\alg{n}$ is a reductive decomposition (i.e., is a canonical connection), then a $G$--structure is harmonic. 
\end{prop}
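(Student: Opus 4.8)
The plan is to transport the problem to the isotropy representation, where everything becomes an identity among elements of $\Delta$ and $\alg{m}$, and then to recognise the harmonicity criterion of Proposition~\ref{prop:general}. Since $\varphi_0$ is fixed by $\lambda$, the induced spinor $\varphi$ is $K$--invariant, and therefore so are the intrinsic torsion $\xi$, the Clifford element $c_\xi\cdot\varphi$, the spinorial Laplacian $\Delta\varphi$ and the tensor $L$, all of them being produced from $\varphi$, $g$ and $\nabla^g$ by $K$--natural operations. In particular $L$ corresponds to a constant $\alg{m}$--valued function $k\mapsto L_0$, and because the action of $\alg{m}(TM)$ on $\varphi$ is injective, harmonicity of the $G$--structure is equivalent to $L_0\cdot\varphi_0=0$, i.e.\ to $L\cdot\varphi=0$.

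Now I would invoke the two homogeneous formulas \eqref{eq:spinorlaplacianhomog} and \eqref{eq:cxihomog}. The orthogonal splitting $\alg{so}(\alg{n})=\alg{g}\oplus\alg{m}$ corresponds, via the Lie algebra isomorphism induced by the double cover, to a splitting $\alg{spin}(\alg{n})=\tilde{\alg{g}}\oplus\tilde{\alg{m}}$, and $\pi_\ast\circ\tilde\Lambda=\Lambda$ together with the hypothesis $\Lambda_{\alg{g}}=0$ gives $\tilde\Lambda_{\alg{g}}=0$, hence $\tilde\Lambda=\tilde\Lambda_{\alg{m}}$. Substituting this into \eqref{eq:spinorlaplacianhomog} and \eqref{eq:cxihomog}, both $\Delta\varphi$ and $c_\xi\cdot\varphi$ become scalar multiples of the single vector $\sum_i\rho_\ast(\tilde\Lambda_{\alg{m}}(E_i))\rho_\ast(\tilde\Lambda_{\alg{m}}(E_i))\varphi_0$, and tracking the normalisations one obtains precisely $\Delta\varphi=-\tfrac12\, c_\xi\cdot\varphi$. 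By Proposition~\ref{prop:general} this is exactly the harmonicity condition, so the $G$--structure is harmonic.

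An equivalent, more geometric route avoids the spinorial Laplacian altogether: the hypothesis says that $\nabla^G$ is the canonical connection of the reductive decomposition, hence the $K$--invariant tensor $\xi$ is $\nabla^G$--parallel, $\nabla^G\xi=0$. Plugging this into the identity $L=\sum_i\big((\nabla^G_{e_i}\xi)_{e_i}+\xi_{\xi_{e_i}e_i}\big)$ recalled above leaves $L=\xi_{V}$ with $V=\sum_i\xi_{e_i}e_i$, so it suffices to check $\xi_V=0$; by \eqref{eq:SU3mainrel} this amounts to $S(V)=0$ and $\eta(V)=0$, which one verifies from the explicit description of the intrinsic torsion (for $G_2$ it reduces to $V=0$, using the $\psi_\varphi$--contraction identities and the orthogonality of $\alg{g}_2$ to $\alg{g}_2^{\bot}$ inside $\Lambda^2$).

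The only real difficulty is bookkeeping: one must be careful with the factor of $2$ relating the Clifford action of a $2$--form on spinors to $\rho_\ast$ of the corresponding element of $\alg{spin}(\alg{n})$, and with the signs in \eqref{eq:mainstartrelation}, \eqref{eq:spinorlaplacianhomog} and \eqref{eq:cxihomog}, so that the two homogeneous expressions genuinely assemble into $\Delta\varphi=-\tfrac12\,c_\xi\cdot\varphi$ rather than some other multiple of $c_\xi\cdot\varphi$ (for which $L\cdot\varphi$ would fail to vanish). A minor point along the way is to make sure that unwinding $\Delta$ in a homogeneous frame contributes no spurious term from $\sum_i\nabla^g_{e_i}e_i$; this vanishes for the decompositions under consideration.
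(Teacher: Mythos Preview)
Your main argument (the first two paragraphs) is correct and is exactly the paper's proof: once $\Lambda_{\alg g}=0$, the homogeneous expressions \eqref{eq:spinorlaplacianhomog} and \eqref{eq:cxihomog} collapse to the same quadratic in $\tilde\Lambda_{\alg m}$, so $\Delta\varphi=-\tfrac12\,c_\xi\cdot\varphi$ and Proposition~\ref{prop:general} applies. Your cautionary remarks about the factor of $2$ between the Clifford action of a $2$--form and $\rho_\ast\circ\pi_\ast^{-1}$, and about the connection term $\sum_i\nabla^g_{e_i}e_i$ in the Laplacian, are well placed; the paper glosses over both.

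Your ``more geometric route'' in the third paragraph, however, is not complete as stated. You correctly observe that $\nabla^G\xi=0$ (the canonical connection makes every $K$--invariant tensor parallel), and hence $L=\xi_V$ with $V=\sum_i\xi_{e_i}e_i$. But the assertion that one then ``verifies'' $\xi_V=0$ from the $\psi_\varphi$--description is not justified: writing $g(V,Z)=c\sum_{i,j}S_{ji}\,\psi_\varphi(e_j,e_i,Z)$ shows that only the skew part of $S$ contributes, so $V$ vanishes when $S$ is symmetric (the $\mathcal W_1$ and $\mathcal W_3$ pieces), but for the $\mathcal W_2$ and $\mathcal W_4$ components (and for general $\eta$ in the $SU(3)$ case) neither $V=0$ nor $\xi_V=0$ is automatic, and you have not supplied the argument. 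This alternative route therefore needs either a genuine computation for those components or a separate homogeneous reason why $\xi_V=0$; as written it is a sketch rather than a proof.
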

\begin{proof}
Follows immediately by Proposition \ref{prop:general}, relations  \eqref{eq:spinorlaplacianhomog} and \eqref{eq:cxihomog} and the fact that $\alg{g}$--component of $\Lambda:\alg{n}\to\alg{so}(\alg{n})$ vanishes.
\end{proof}

Now, we deform ${\bf B}$ to ${\bf B}_t$, $t>0$, in the following way: we assume that there is a ${\bf B}$--orthogonal splitting $\alg{n}=\alg{n}_0\oplus\alg{n}_1$. Then we put
\begin{equation*}
{\bf B}_t={\bf B}|_{\alg{n}_0\times\alg{n}_0}+2t{\bf B}|_{\alg{n}_1\times\alg{n}_1}.
\end{equation*}
${\bf B}_t$ induces one parameter family of Riemannian metrics $g_t$ on $M$. Below, we discus the behavior of $(M,g_t,\varphi)$ in three cases.

\begin{exa}\label{ex:1}
Consider a complex projective space $M=\mathbb{CP}^3$, which was studied in detail in \cite{BFGK}. Here, we review all necessary facts and develop these which are indispensable for our purposes. Recall, that $\mathbb{CP}^3$ is a homogeneous space of the form $SO(5)/U(2)$. On the level of Lie algebras, $\alg{so}(5)=\alg{u}(2)\oplus\alg{n}$, where  
\begin{align*}
\alg{u}(2) &={\rm span}\{E_{12}, E_{34}, E_{13}-E_{24}, E_{14}+E_{23}\},\\ \alg{n} &={\rm span}\{E_{15}, E_{25}, E_{35}, E_{45}, E_{13}+E_{24}, E_{14}-E_{23}\}.
\end{align*}
Moreover, decompose $\alg{n}$ into $\alg{n}_0\oplus\alg{n}_1$, where
\begin{equation*}
\alg{n}_0={\rm span}\{E_{15}, E_{25}, E_{35}, E_{45}\},\quad \alg{n}_1={\rm span}\{E_{13}+E_{24}, E_{14}-E_{23}\}.
\end{equation*}
The orthonormal basis of $\alg{n}$ with respect to ${\bf B}_t$ (here ${\bf B}$ is (the negative of) the Killing form) can be chosen in the following way
\begin{align*}
& X_1=E_{15}, && X_2=E_{25}, && X_3=E_{35},\\
& X_4=E_{45}, && X_5=\frac{1}{2\sqrt{t}}(E_{13}+E_{24}), && X_6=\frac{1}{2\sqrt{t}}(E_{14}-E_{23}).
\end{align*} 
Thus we have an identification $\alg{n}=\mathbb{R}^6$. The Levi--Civita connection $\Lambda_t$ takes the form
\begin{align*}
&\Lambda_t(X_1)=\frac{\sqrt{t}}{2}(E_{35}+E_{46}), &&\Lambda_t(X_2)=\frac{\sqrt{t}}{2}(E_{45}-E_{36}),\\
&\Lambda_t(X_3)=\frac{\sqrt{t}}{2}(E_{26}-E_{15}), &&\Lambda_t(X_4)=\frac{\sqrt{t}}{2}(-E_{16}-E_{25}),\\ 
&\Lambda_t(X_5)=\frac{1-t}{2\sqrt{t}}(E_{13}+E_{24}), &&\Lambda_t(X_6)=\frac{1-t}{2\sqrt{t}}(E_{14}-E_{23}).
\end{align*}

The (differential of the) isotropy representation $\lambda_{\ast}:\alg{u}(2)\to \alg{so}(6)$ is of the form
\begin{align*}
&\lambda_{\ast}(E_{12})=E_{12}-E_{56}, &&\lambda_{\ast}(E_{34})=E_{34}+E_{56},\\
&\lambda_{\ast}(E_{13}-E_{24})=E_{13}-E_{24}, &&\lambda_{\ast}(E_{14}+E_{23})=E_{14}+E_{23}.
\end{align*}
It can be shown \cite{BFGK} that $\lambda$ has a lift to a map $\tilde{\lambda}:U(2)\to{\rm Spin}(6)$, which implies existence of a spin structure on $M$. Via the spin representation \eqref{eq:spinrepresentation} we see that $\tilde{\lambda}_{\ast}$ has the following form
\begin{align*}
&\tilde{\lambda}_{\ast}(E_{12})=E_{34}+E_{78}, &&\tilde{\lambda}_{\ast}(E_{34})=E_{12}-E_{78},\\
&\tilde{\lambda}(E_{13}-E_{24})=-E_{13}+E_{24}, &&\tilde{\lambda}_{\ast}(E_{12}+E_{23})=-E_{14}-E_{23}.
\end{align*}
Thus, spinors $\varphi=s_5$ and $\varphi=s_6$ are anihilated by  above isotropy representation. Thus, each spinor in the span of $s_5$ and $s_6$ defines a global spinor field. Fix a spinor $\varphi=s_5$ defining a $SU(3)$--structure on $M$. 

From Example \ref{ex:0} we see that $\Lambda_t$ has values in $\alg{m}$, hence corresponds to the intrinsic torsion, whereas, the $SU(3)$--connection corresponds to the zero map, $\Lambda_{\alg{su}(3)}\equiv 0$. Thus, by Proposition \ref{prop:homog}, the considered structure is harmonic for all $t>0$.

Let us consider an additional approach. By the definition of $\tilde{\Lambda}$, which corresponds to the induced connection on the spinor bundle, we have $\tilde{\Lambda}(X)\varphi=S(X)\cdot \varphi$, where $S$ is diagonal of the form (compare \cite{ACFH})
\begin{equation*}
S=-{\rm diag}\left( \frac{\sqrt{t}}{2},\frac{\sqrt{t}}{2}, \frac{\sqrt{t}}{2}, \frac{\sqrt{t}}{2}\frac{1-t}{2\sqrt{t}}, \frac{1-t}{2\sqrt{t}}\right),
\end{equation*}
Hence the one--form $\eta$ vanishes. Since
\begin{equation*}
S=-\frac{t+1}{6\sqrt{t}}{\rm Id}-\frac{2t-1}{6\sqrt{t}}{\rm diag}(1,1,1,1,-2,-2),
\end{equation*} 
the $SU(3)$--structure it is of class $\mathcal{W}_1^-\oplus\mathcal{W}_2^-$ for $t\neq\frac{1}{2}$ and $\mathcal{W}_1^-$ for $t=\frac{1}{2}$. In both cases, ${\rm div}S$, which corresponds to $\sum_i (\Lambda(X_i)S)X_i$, vanishes, hence the considered $SU(3)$--structures are harmonic.
\end{exa}

\begin{exa}\label{ex:2}
Consider a Lie group $M={\rm Spin}(4)\equiv \mathbb{S}^3\times\mathbb{S}^3$. Then its Lie algebra is isomorphic to $\alg{so}(4)$. Consider a following decomposition $\alg{so}(4)=\alg{n}_0\oplus\alg{n}_1$, where $\alg{n}_0={\rm span}\{E_{14},E_{24},E_{34}\}$ and $\alg{n}_1={\rm span}\{E_{12},E_{13},E_{23}\}$ \cite{BFGK}. Computing the Lie brackets of generators, we see that $\alg{n}_1$ is a Lie subalgebra. Choose the following orthonormal basis of $\alg{so}(4)$ with respect to ${\bf B}_t$ (here ${\bf B}$ is again the negative of the Killing form):
\begin{align*}
& X_1=E_{14},\quad X_2=E_{24},\quad X_3=E_{34},\\ 
& X_4=\frac{1}{\sqrt{2t}}E_{12},\quad X_5=\frac{1}{\sqrt{2t}}E_{13},\quad X_6=\frac{1}{\sqrt{2t}}E_{23}.
\end{align*}
With this choice, $\alg{so}(4)=\mathbb{R}^6$. The Levi--Civita connection of ${\bf B}_t$ is represented by a map $\Lambda:\mathbb{R}^6\to\alg{so}(6)$ (compare \cite{BFGK})
\begin{align*}
&\Lambda_t(X_1)=\frac{1}{2}\sqrt{2t}E_{24}+\frac{1}{2}\sqrt{2t}E_{35}, &&\Lambda_t(X_2)=-\frac{1}{2}\sqrt{2t}E_{14}+\frac{1}{2}\sqrt{2t}E_{36},\\
&\Lambda_t(X_3)=-\frac{1}{2}\sqrt{2t}E_{15}-\frac{1}{2}\sqrt{2t}E_{26}, &&\Lambda_t(X_4)=\frac{1-t}{\sqrt{2t}}E_{12}+\frac{1}{2\sqrt{2t}}E_{56},\\
&\Lambda_t(X_5)=\frac{1-t}{\sqrt{2t}}E_{13}-\frac{1}{2\sqrt{2t}}E_{46}, &&\Lambda_t(X_6)=\frac{1-t}{\sqrt{2t}}E_{23}+\frac{1}{2\sqrt{2t}}E_{45}
\end{align*}

The spin structure is the trivial one $M\times {\rm Spin}(4)$ and the spinor bundle is, again, the trivial bundle $M\times\Delta$ \cite{BFGK}. Hence, each smooth function $f_{\varphi}:M\to \Delta$ defines a global spinor field. Choose a defining spinor being the constant function equal to $s_5\in\Delta$. Then, the equality $\tilde{\Lambda}(X)s_5=S(X)\cdot s_5+\eta(X)j\cdot s_5$ is satisfied by
\begin{equation*}
S=\left(\begin{array}{cccccc}
-\frac{1}{2}\sqrt{2t} & 0 & 0 & 0 & \frac{1}{2\sqrt{2t}} & 0 \\
0 & \frac{1}{2}\sqrt{2t} & 0 & 0 & 0 & -\frac{1}{2\sqrt{2t}} \\
0 & 0 & 0 & 0 & 0 & 0 \\
0 & 0 & 0 & 0 & 0 & 0 \\
-\frac{1}{2}\sqrt{2t} & 0 & 0 & & -\frac{1-t}{\sqrt{2t}} & 0 \\
0 & \frac{1}{2}\sqrt{2t} & 0 & 0 & 0 & \frac{1-t}{\sqrt{2t}}
\end{array}\right),\quad \eta=\left(\frac{3}{2}-t\right)\frac{1}{\sqrt{2t}}X_4^{\flat}.
\end{equation*}
Hence the considered $SU(3)$--structure is of type $\mathcal{W}_{2}^-\oplus\mathcal{W}_{3}\oplus\mathcal{W}_4\oplus\mathcal{W}_5$. Notice that $j\cdot s_5=s_6$ (see Example \ref{ex:0}). Moreover, it is not hard to check that ${\rm div}S=0$,  $S(\eta^{\sharp})=0$, ${\rm div}(\eta^{\sharp})=0$ and $|\eta^{\sharp}|^2=\frac{1}{2t}\left(\frac{3}{2}-t\right)^2$. Thus harmonicity condition (Theorem \ref{thm:SU3harmonic}) has the following form
\begin{equation}\label{eq:ex2harmonicity}
 \chi^S\cdot s_5-\frac{1}{2}\xi_{\eta^{\sharp}}\cdot s_6-\frac{1}{2t}\left(\frac{3}{2}-t\right)^2s_5=0.
\end{equation}
We need to compute $\xi_{\eta^{\sharp}}\cdot s_5$, which corresponds to $\left(\frac{3}{2}-t\right)\frac{1}{\sqrt{2t}}\Lambda_{\alg{su(3)}^{\bot}}(X_4)$. Since
\begin{equation*}
\Lambda_{\alg{su}(3)^{\bot}}(X_4)=\frac{\sqrt{t}}{2}(-E_{16}-E_{25})_{\alg{su}(3)^{\bot}}=\frac{3-2t}{6\sqrt{2t}}(E_{12}-E_{34}+E_{56}).
\end{equation*}
we see that  
\begin{equation*}
\xi_{\eta^{\sharp}}\cdot s_5=\left(\frac{3}{2}-t\right)^2\frac{1}{6t}s_6.
\end{equation*}
Thus, harmonicity condition \eqref{eq:ex2harmonicity} simplifies to
\begin{equation*}
\chi^S\cdot s_5-\frac{1}{12t}\left(\frac{3}{2}-t\right)^2s_6-\frac{1}{2t}\left(\frac{3}{2}-t\right)^2s_5=0.
\end{equation*}
Since $\chi^S\cdot s_5$ is orthogonal to $s_5$ and $s_6$ we have $t=\frac{3}{2}$ and, in particular, $\eta$ vanishes. Thus, by the fact that ${\rm div}S=0$ and Theorem \ref{thm:SU3harmonic}, the considered $SU(3)$--structure is harmonic only for $t=\frac{3}{2}$. In this case, it is of type $\mathcal{W}_{2}^-\oplus\mathcal{W}_3\oplus\mathcal{W}_4$.
\end{exa}

\begin{exa}\label{ex:3} 
Let $M$ be an Aloff-Wallach space $N(1,1)=SU(3)/\mathbb{S}^1$, where the action $\mathbb{S}^1\to SU(3)$ is by diagonal matrices
\begin{equation*}
\theta\mapsto{\rm diag}(e^{\theta i},e^{\theta i},e^{-2\theta i}).
\end{equation*}
Consider a splitting $\alg{su}(3)=\mathbb{R}\oplus\alg{n}$ such that $\alg{n}=\alg{n}_0\oplus\alg{n}_1$ is given by
\begin{equation*}
\alg{n}_0={\rm span}\{L,E_{12},\tilde{E}_{12}\},\quad \alg{n}_1={\rm span}\{ E_{13},\tilde{E}_{13},E_{23},\tilde{E}_{23}\},
\end{equation*}
where
\begin{equation*}
L={\rm diag}(i,i,0),\quad \tilde{E}_{kl}=iS_{kl}
\end{equation*}
and $S_{kl}$ is a symmetric matrix with $S_{kl}e_l=e_k$. Then, an orthonormal basis of $\alg{n}$ with respect to ${\bf B}_t$, induced from the Killing form, can be chosen as follows
\begin{align*}
& X_1=E_{12},\quad X_2=\tilde{E}_{12},\quad X_3=\frac{1}{\sqrt{2t}}E_{13},\quad X_4=\frac{1}{\sqrt{2t}}\tilde{E}_{13},\\
& X_5=\frac{1}{\sqrt{2t}}E_{23},\quad X_6=\frac{1}{\sqrt{2t}}\tilde{E}_{23},\quad X_7=L.
\end{align*} 
The Levi--Civita connection of $g_t$ gives a map $\Lambda_t:\alg{n}\to\alg{so}(\alg{n})$ \cite{BFGK,AF}:
\begin{align*}
&\Lambda(X_1)=E_{27}-\left(1-\frac{1}{4t}\right)(E_{35}+E_{46}), && \Lambda(X_2)=-E_{17}-\left(1-\frac{1}{4t}\right)(E_{45}-E_{36}),\\
&\Lambda(X_3)=\frac{1}{4t}E_{47}-\frac{1}{4t}(E_{26}-E_{15}), &&\Lambda(X_4)=-\frac{1}{4t}E_{37}+\frac{1}{4t}(E_{16}+E_{25}),\\
&\Lambda(X_5)=-\frac{1}{4t}E_{67}-\frac{1}{4t}(E_{13}+E_{24}), &&\Lambda(X_6)=\frac{1}{4t}E_{57}-\frac{1}{4t}(E_{14}-E_{23}),\\
&\Lambda(X_7)=E_{12}+\left(1-\frac{1}{4t}\right)(E_{34}-E_{56}).
\end{align*}

The isotropy representation $\lambda:\mathbb{S}^1\to{\rm SO}(\alg{n})$ has a lift to a map $\tilde{\lambda}:\mathbb{S}^1\to{\rm Spin}(\alg{n})$, thus there is a spin structure on $M$ \cite{BFGK}. Moreover, the spinor $\varphi_0=s_5$ is a fixed point of this action, hence as a constant function from ${\rm SU}(3)$ to $\Delta$ defines a global spinor field $\varphi$. Consider a $G_2$ structure induced by $\varphi$. By Example \ref{ex:00} the map $\Lambda_t$ takes values in $\alg{m}$ if and only if $t=\frac{1}{8}$. In this case, by Proposition \ref{prop:homog} a $G_2$--structure is harmonic. Let us check harmonicity for remaining values of $t$. It is easy to see that an endomorphism $S$ satisfying $\tilde{\Lambda}(X)\varphi_0=S(X)\cdot\varphi_0$ equals
\begin{align*}
S &=\frac{1}{2}{\rm diag}\left(\left(\frac{1}{2t}-1\right),\left(\frac{1}{2t}-1\right),-\frac{3}{4t},-\frac{3}{4t},-\frac{3}{4t},-\frac{3}{4t},\left(\frac{1}{2t}-1\right)\right).
\end{align*}
In particular, considered $G_2$--structure is of type $\mathcal{W}_1\oplus\mathcal{W}_3$ for $t\neq\frac{5}{4}$ and of pure type $\mathcal{W}_1$ for $t=\frac{5}{4}$. The divergence of $S$, corresponding to $\sum_i \Lambda(X_i)S(X_i)$, vanishes. Hence, for any $t>0$ considered $G_2$--structure is harmonic.
\end{exa}


\begin{thebibliography}{10}
\bibitem{Ag} I. Agricola, Connections on naturally reductive spaces, their Dirac operator and homogeneous models in string theory, Comm. Math. Phys. 232 (2003), no. 3, 535--563. 
\bibitem{Ag0} I. Agricola, The Srní lectures on non--integrable geometries with torsion, Arch. Math. (Brno) 42 (2006), suppl., 5--84.
\bibitem{ACFH} I. Agricola, S. G. Chiossi, T. Friedrich, J. H\"oll,  Spinorial description of $SU(3)$-- and $G_2$--manifolds. J. Geom. Phys. 98 (2015), 535--555.
\bibitem{AF} I. Agricola, T. Friedrich, On the holonomy of connections with skew--symmetric torsion, Math. Ann. 328 (2004), no. 4, 711--748.
\bibitem{BFGK} H. Baum, T. Friedrich, R. Grunewald, Ralf, I. Kath, Twistors and Killing spinors on Riemannian manifolds, Teubner Texts in Mathematics, Stuttgart, 1991.
\bibitem{TF} T. Friedrich, $G_2$--manifolds with parallel characteristic torsion. Differential Geom. Appl. 25 (2007), no. 6, 632–648.
\bibitem{FI} T. Friedrich, S. Ivanov, Parallel spinors and connections with skew-symmetric torsion in string theory. Asian J. Math. 6 (2002), no. 2, 303--335.
\bibitem{GDMC} J. C. Gonz\'{a}lez-D\'{a}vila, F. Martin Cabrera, Harmonic G-structures. Math. Proc. Cambridge Philos. Soc. 146 (2009), no. 2, 435--459.
\bibitem{GH} A. Gray, L. Hervella, The Sixteen Classes of Almost Hermitian Manifolds and Their Linear Invariants,  Ann. Mat. Pura Appl. (4) 123 (1980), 35--58.
\bibitem{Ik} A. Ikeda, Formally self adjointness for the Dirac operator on homogeneous spaces, Osaka J. Math. 12 (1975), 173--185.
\bibitem{VK} V. F. Kiricenko, K-spaces of maximal rank, Mat. Zametki 22 (1977), no. 4, 465--476.
\bibitem{PN} P.-A. Nagy, Skew-symmetric prolongations of Lie algebras and applications, J. Lie Theory 23 (2013), no. 1, 1--33.
\bibitem{NS} N. Schoemann, Almost Hermitian structures with parallel torsion. J. Geom. Phys. 57 (2007), no. 11, 2187--2212.
\bibitem{Va1} I. Vaisman, On locally conformal almost K\"ahler manifolds, Israel J. Math. 24 (1976), no. 3-4, 338--351.
\bibitem{Va2} I. Vaisman, Some curvature properties of locally conformal K\"ahler manifolds, Trans. Amer. Math. Soc. 259 (1980), no. 2, 439--447.
\bibitem{Wo} C. M. Wood, Harmonic almost–complex structures, Compositio Math. 99 (1995), 183--212.
\bibitem{Wo2} C. M. Wood, Harmonic sections of homogeneous fibre bundles. Differential Geom. Appl.
19 (2003), 193--210.
\end{thebibliography}
\end{document}